\def\N{\mathbb{N}}
\def\Z{\mathbb{Z}}
\newtheorem{theorem}{Theorem}[section]
\newtheorem{proposition}[theorem]{Proposition}
\newtheorem{corollary}[theorem]{Corollary}
\newtheorem{example}[theorem]{Example}
\newtheorem{definition}[theorem]{Definition}
\begin{document}

\title{Geometric Characterization of Data Sets with Unique Reduced Gr\"{o}bner Bases}

\author{Elena S. Dimitrova \and Qijun He \and Brandilyn Stigler \and Anyu Zhang}


\maketitle

\begin{abstract}

Model selection based on experimental data is an important challenge in biological data science. Particularly when collecting data is expensive or time consuming, as it
is often the case with clinical trial and biomolecular experiments, the problem of selecting information-rich data becomes crucial for creating relevant models. We identify geometric properties of input data that result in a unique algebraic model and we show that if the data form a staircase, or a so-called linear shift of a staircase, the ideal of the points has a unique reduced Gr\"obner basis and thus corresponds to a unique model. We use linear shifts to partition data into equivalence classes with the same basis. We demonstrate the utility of the results  by applying them to a Boolean model of the well-studied
\textit{lac} operon in \textit{E. coli}. 

\medskip

\noindent\emph{Keywords}: Biological data science, Algebraic design of experiments, Gr\"obner bases, Ideals of points, Staircases of monomial ideals

\noindent\emph{MSC}: 14 \and 92
\end{abstract}

\section{Introduction}

Developing predictive models from large-scale experimental data is an important problem in biological data science \citep{schatz}.  
In certain settings, such as when modeling biological switches \citep{dalchau}, it is advantageous or even necessary to view the input data as discrete \citep{dimitrova2010}.
While there are many classes of functions that can fit data from an underlying network, data over a finite field can be fit by polynomials~\citep{lidl1997finite}. A useful consequence is that polynomial models of such data can be written in terms of a monomial basis, where each choice of monomial basis provides a different prediction regarding network structure \citep{jarrah2007reverse}.  

More precisely, when the entries of a given data set are elements of a finite field, they can be viewed as an algebraic variety of a system of polynomials and the corresponding set of  polynomials which vanish on the points in the variety is a \textit{zero-dimensional ideal of points} \citep{cox}.  A popular computational tool for ideals of points is a \textit{Gr\"obner basis} (GB), which is a generalization of echelon forms of linear systems \citep{abbott2000computing, farr, dong}.  Studying GBs, in particular the number of different GBs for a given ideal of points, has impact on any application which depends on GB computation for model construction.

While algebraic-geometry literature is vast with regards to finding solutions of polynomial systems, to the best of our knowledge, the only result which connects directly geometric properties of points in a variety $V$ to the uniqueness of reduced GBs of an  ideal of points $I(V)$ for any monomial order is found in \citep{robbiano-unique-gb} (Theorem 4.8(b)): it implies that an ideal of points $I(V)$ and the ideal of the set complement of the points $I(V^C)$ have the same number of reduced GBs and thus for any monomial order, $I(V)$ has a unique reduced GB if and only if $I(V^C)$ has a unique reduced GB.  The uniqueness of reduced GBs has significance in the following two applications.

In \citep{laubenbacher2004computational} Gr\"obner bases were applied to the problem of model selection in systems biology.  They were introduced as a tool to select minimal models from a set of polynomial dynamical systems that fit discrete experimental data: for a given set of data points over a finite field, the ideal of points forms a coset representing the space of polynomial dynamical systems that fit the data and a minimal model is selected from the space by computing a reduced GB of the ideal and taking the normal forms of the model equations. While this provides an algorithmic solution to model selection, each choice of monomial order results in a different GB and thus, likely, in a different minimal polynomial dynamical system. To remedy this computational artifact,  \citet{dimitrova2014data} proposed a systematic way of adding new data points to an existing data set to ensure that the ideal of points has a unique reduced GB, yielding a unique minimal model. They also gave an algebraic characterization of the smallest set of points that need to be added. The method, however, involves solving a large system of polynomial equations even for small data sets. 

More recently, neural ideals were introduced in \citep{neuralring} as an algebraic object that can be used to better understand
the combinatorial structure of neural codes. A neural ideal has a special generating set, called its canonical
form, that encodes a minimal description of the so-called receptive field structure intrinsic to the neural code.
Also, for a given monomial order, a neural ideal is generated by its (reduced) Gr\"obner basis with respect to that monomial order. It was shown in \citep{gb} that for small dimensions, Gr\"obner basis computations are faster than canonical form ones and it is thus desirable to be able to identify neural ideals whose canonical forms are Gr\"obner bases. They proceeded to show that this is the case exactly when the neural ideal has a unique Gr\"obner basis. However, there is still no known condition on the neural codes themselves that guarantees that the corresponding neural ideal has a unique Gr\"obner basis. 

This paper addresses an important question in algebraic design of experiments: What properties should a discrete data set satisfy in order to uniquely identify an algebraic model?  
Since the possibility of having multiple models that correspond to a single data set arises from the fact that a polynomial ideal can have multiple reduced GBs, we approach this question by identifying properties of the input data that result in a unique reduced Gr\"obner basis for any monomial order. 

For $m$ data points and $n$ variables, an upper bound for the number of distinct reduced Gr\"obner bases for an ideal of points is $O(m^{2n(n-1)/(n+1)})$ \citep{babson2003hilbert}; however in the finite field setting, this upper bound will be much smaller.   From an algebraic geometry point of view, the problem is the following: find varieties $V$ in $\mathbb Z_p^n$ whose ideals of points $I(V)$ have unique reduced GBs for any monomial order.  Alternatively the problem  can be phrased as identifying varieties $V$ for which the universal Gr\"obner basis of $I(V)$ is itself a reduced Gr\"obner basis, or for which the Gr\"obner fan of $I(V)$ consists of a single cone.  

We define the concept of a \textit{linear shift}, originally introduced in \citep{he2016} for varieties and extended to rings in \citep{robbiano-unique-gb}. We show that linear shifts are equivalence relations on data sets in $\mathbb Z_p^n$ (where $\Z_p$ is the field of integers modulo~$p$) of fixed size.
We use linear shifts to partition data into equivalence classes with the same basis.  This impacts research in biological data science which uses discrete data, including data storage in the biomedical sciences \citep{adam} and model selection in functional genomics \citep{chamberlin}.

In Section \ref{sec-linear-shift}, we establish a geometric property of the points in $V\subseteq \Z_p^n$ which guarantees that the ideal of the points $I(V)$ has a unique reduced Gr\"obner basis, regardless of the monomial order: Corollary \ref{cor:shift} to the main result, Theorem~\ref{thm:staircase}. Finally we apply the main result to a Boolean model of the well-studied \textit{lac} operon in \textit{E. coli}.

\section{Background}
\label{sec:background}

Much of the notation and formalization in this section is due to \citep{cox}. 

Let $k$ be a field and $n\in \N$. Let $I\subseteq k[x_1,\ldots,x_n]$ be an ideal and $LT_\prec(I)$ be the leading term ideal of $I$ with respect to some monomial order $\prec$.  Recall that the quotient ring $k[x_1,\ldots,x_n]/I$ is isomorphic to $span\{x^\alpha | x^\alpha \not \in  LT_\prec(I)\}$ as a $k$-vector space.  In fact, for each choice of monomial order, $\{x^\alpha | x^\alpha \not \in LT_\prec(I)\}$ forms a basis for $k[x_1,\ldots,x_n]/I$.  Such monomials are called \textit{standard} with respect to $\prec$; we denote the set $\{x^\alpha | x^\alpha \not \in LT_\prec(I)\}$ as $SM_\prec(I)$. Note that standard monomials satisfy the following divisibility property: if $x^\alpha \in SM_\prec(I)$ and $x^\beta | x^\alpha$, then $x^\beta \in SM_\prec(I)$. 

In the current setting, as all ideals are zero-dimensional, the quotient ring $k[x_1,\ldots,x_n]/I$ is finite dimensional as a vector space.  Hence the set $SM_\prec(I)$ of standard monomials associated with $I$ is finite and can be represented as a \textit{staircase}, which is a set $\lambda \subseteq \N^n$ of nonnegative integer vectors such that if $v \in \lambda$ and $u\leq v$ coordinate-wise, then $u\in \lambda$. Such staircases can be visualized on an integer lattice where a monomial is depicted via its exponent vector: $(m,n)\leftrightarrow x^my^n$; see Figure \ref{figure:standard_staircases}.   

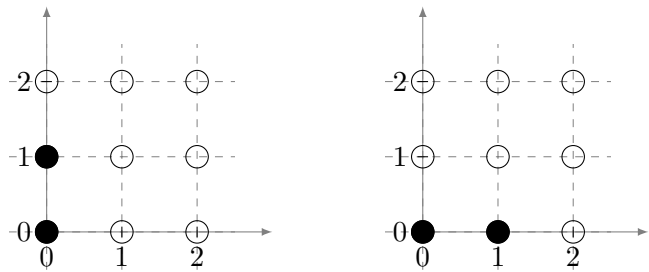
\begin{figure}[ht]
  \centering
  \begin{tikzpicture}
  \begin{scope}
    \coordinate (Origin)   at (0,0);
    \coordinate (XAxisMin) at (-0.5,0);
    \coordinate (XAxisMax) at (3,0);
    \coordinate (YAxisMin) at (0,-0.5);
    \coordinate (YAxisMax) at (0,3);
    \draw [thin, gray,-latex] (Origin) -- (XAxisMax);
    \draw [thin, gray,-latex] (Origin) -- (YAxisMax);

    \clip (-0.5,-0.5) rectangle (2.5cm,2.5cm); 
    \pgftransformcm{1}{0}{0}{1}{\pgfpoint{0cm}{0cm}}
    \coordinate (Bone) at (0,2);
    \coordinate (Btwo) at (2,-2);
    \draw[style=help lines,dashed] (-14,-14) grid[step=1cm] (14,14);
    \node[draw,circle,inner sep=3pt,fill] at (0,0) {};
      \node[draw,circle,inner sep=3pt,fill] at (0,1) {};
    \foreach \x in {0,1,...,7}{
      \foreach \y in {0,1,...,7}{
        \node[draw,circle,inner sep=3pt] at (\x,\y) {};
      }
    }
    \foreach \x in {0,1,2,3,4}
    \draw (\x cm,2pt) -- (\x cm,-2pt) node[anchor=north] {$\x$};
    \foreach \y in {0,1,2,3,4}
    \draw (2pt,\y cm) -- (-2pt,\y cm) node[anchor=east] {$\y$};
\end{scope}
    
     \begin{scope}[shift={(5,0)}]
    \coordinate (Origin)   at (0,0);
    \coordinate (XAxisMin) at (-0.5,0);
    \coordinate (XAxisMax) at (3,0);
    \coordinate (YAxisMin) at (0,-0.5);
    \coordinate (YAxisMax) at (0,3);
    \draw [thin, gray,-latex] (Origin) -- (XAxisMax);
    \draw [thin, gray,-latex] (Origin) -- (YAxisMax);

    \clip (-0.5,-0.5) rectangle (2.5cm,2.5cm); 
    \pgftransformcm{1}{0}{0}{1}{\pgfpoint{0cm}{0cm}}
    \coordinate (Bone) at (0,2);
    \coordinate (Btwo) at (2,-2);
    \draw[style=help lines,dashed] (-14,-14) grid[step=1cm] (14,14);
    \node[draw,circle,inner sep=3pt,fill] at (0,0) {};
     \node[draw,circle,inner sep=3pt,fill] at (1,0) {};
    \foreach \x in {0,1,...,7}{
      \foreach \y in {0,1,...,7}{
        \node[draw,circle,inner sep=3pt] at (\x,\y) {};
      }
    }
    \foreach \x in {0,1,2,3,4}
    \draw (\x cm,2pt) -- (\x cm,-2pt) node[anchor=north] {$\x$};
    \foreach \y in {0,1,2,3,4}
    \draw (2pt,\y cm) -- (-2pt,\y cm) node[anchor=east] {$\y$};
\end{scope}
  \end{tikzpicture}
  \caption{Staircases of the ideal $I(V)$, where $V=\left\{(2,0),(0,1)\right\}\subseteq \mathbb{Z}_3^2$. For a given Gr\"{o}bner
basis, each diagram shows the interface between the standard monomials, which are represented by black dots, and the leading terms, which are represented by white dots. Here, $SM_1(I(V))=\{1,y\}$ and $SM_2(I(V))=\{1,x\}$.}
  \label{figure:standard_staircases}
\end{figure}

Finally, recall that if $\prec$ is a monomial order and $I \subseteq k[x_1, \ldots, x_n]$ is nonzero, then a subset $G = \{g_1, \ldots, g_t\}$ is a \textit{Gr\"{o}bner basis} for $I$ with respect to $\prec$  if $\langle LT_\prec(g_1), \ldots , LT_\prec(g_t)\rangle = \langle LT_\prec(I)\rangle$. Furthermore, a Gr\"{o}bner basis is \textit{reduced} if the leading coefficient of each element of the basis is $1$ and no monomial in any element of the basis is in the ideal generated by the leading terms of the other elements of the basis. Given a fixed monomial order, an ideal $I$ has a unique reduced Gr\"{o}bner basis. 

The union of all reduced Gr\"obner bases is \textit{the universal Gr\"obner basis} of $I$, which is a Gr\"obner basis for every monomial order.  We note that the universal GB may not be reduced when $I$ has multiple reduced GBs associated to different monomial orders.

\begin{example}
\label{toy-ex}
Let $V=\{(0,0),(1,0),(2,1)\}\subseteq \mathbb Z_3$ and $I(V)\subseteq \mathbb Z_3[x,y]$ be the ideal of polynomials that vanish on the points in~$V$. Let~$\prec_1$ denote the monomial order with weight vector $(1,1)$ and~$\prec_2$ the order with weight vector $(1,3)$.
Then $I(V)$ has two distinct reduced Gr\"{o}bner bases, $GB_{\prec_1}(I(V))=\{y^2-y, xy+y, x^2-x+y\}$ and $GB_{\prec_2}(I(V)) =\{x^3-x, y+x^2-x\}$. So the universal Gr\"obner basis for $I(V)$ is $\mathcal G=\{y^2-y, xy+y, x^2-x+y\}\cup \{x^3-x,y+x^2-x\}$.  This set has two different leading term ideals: $LT_{\prec_1}(I(V))=\langle y^2, xy, x^2\rangle$ and  $LT_{\prec_2}(I(V))=\langle x^3-x,y+x^2-x\rangle$. The associated standard monomial bases are $SM_{\prec_1}(I(V)) =\{1, x, y\}$ and $SM_{\prec_2}(I(V)) = \{1, x, x^2\}$.
\end{example}

We note that every ideal of points over a finite field of characteristic~$p>0$ has  relations of the form~$x^p-x$ for every variable.  As such, the degrees of a variable in a polynomial are bounded above by~$p-1$, though polynomials may have high total degree due to many mixed terms.

\begin{example}
Consider a network of three nodes $x,y,z$ with states in $\mathbb Z_3$, where 0 stands for low, 1 for medium, 2 for high. Suppose that $z$ is regulated by~$x$ and~$y$ and it outputs are displayed in  the table below for the input data in Example~\ref{toy-ex}.
\begin{center}
\begin{tabular}{|l|| l l |c|}
\hline
Time & $x$ & $y$ & $z=f(x,y)$  \\ \hline
$t_1$ & 0 & 0 & 0 \\ \hline
$t_2$ & 1 & 0 & 0 \\ \hline
$t_3$ & 2 & 1 & 1 \\ \hline
\end{tabular}
\end{center}
As computed above, we know that $I(V)$ has two distinct reduced GBs with corresponding standard monomial bases.  Given each basis,~$f$ can be written as a linear combination $ax^{\alpha_1}+bx^{\alpha_2}+cx^{\alpha_3}$ of the basis elements.  To find the coefficients, one can set up evaluation matrices~$\mathbb X$ as follows:
\vspace{0.1in}

\begin{center}
\begin{tabular}{|l|l l l|}
\hline
 $(x,y)$ & $1$ & $x$ & $y$  \\ \hline
 (0,0) & 1 & 0 & 0  \\ \hline
 (1,0) & 1 & 1 & 0  \\ \hline
 (2,1) & 1 & 2 & 1  \\ \hline
\end{tabular}
\hspace{0.1in} and \hspace{0.1in}
\begin{tabular}{|l|l l l|}
\hline
 $(x,y)$ &$1$ & $x$ & $x^2$  \\ \hline
 (0,0) &1 & 0 & 0 \\ \hline
 (1,0) &1 & 1 & 1 \\ \hline
 (2,1) &1 & 2 & 1 \\ \hline
\end{tabular} 
\vspace{0.1in}
\end{center}
\noindent Solving each linear system $\mathbb X%
[\begin{matrix}
    a & b & c \\
\end{matrix}]^T
=[\begin{matrix}
    0 & 0 & 1 \\
\end{matrix}]^T
$
results in two candidate models for $f$ which are compatible with the given data: $f=y$ and $f=x + 2x^2$.

\end{example}

Note that the first function model is regulated only by gene $y$ but the second model is regulated only by gene $x$.  This presents a problem for experimentalists in that the two models suggest different regulatory relationships. We therefore ask the question: Which datasets generate a unique model?  That is, we are interested in  varieties for which the universal Gr\"obner basis is itself a reduced GB and has a single leading term ideal (and therefore single standard monomial basis) regardless of which monomial order is chosen.  

\section{Geometric properties of data sets whose ideals have unique reduced Gr\"{o}bner bases} 
\label{sec-linear-shift}

In this section we give a geometric characterization of $V\subseteq \Z_p^n$ such that $I(V)$ has a unique reduced Gr\"{o}bner basis, regardless of the monomial order. Unless otherwise stated, definitions and results in this section are found in~\citep{he2016}.

In Section \ref{sec:background}, we recalled that exponent vectors of standard monomials of an ideal $I$ form a staircase since any divisor of a standard monomial is again a standard monomial. Such a staircase is called \textit{initial} \citep{babson2003hilbert}; for example, the staircases in Figure \ref{figure:standard_staircases} are initial. Furthermore, a staircase $\lambda$ is \textit{basic}  for $I$ if the congruence classes modulo $I$ of the monomials $x^v$ with $v\in \lambda$ form a vector space basis for the quotient space $\Z_p [x_1,\ldots,x_n]/I$ \citep{babson2003hilbert}. 

If $\lambda$ is basic, then the class $[f] = f + I$ of any $f\in \Z_{p}[x_1,\ldots,x_n]$ can be uniquely represented as a linear combination of elements in $\{x^v\mid v\in \lambda\}$. For a given monomial order, any polynomial $f\in \Z_{p}[x_1,\ldots,x_n]$ has a unique normal form with respect to $I$. Hence an initial staircase of an ideal $I$ is basic. As it will be shown in Theorem~\ref{thm:basic}, we can determine whether a set $\lambda \in \Z_p^n$ is basic by checking the invertibility of the evaluation matrix defined next.

\begin{definition}
Let $\lambda=\{u^1,\ldots,u^r\}$ be an $r$-subset of $\Z_p^n$ and let $V=\{v^1,\ldots,v^s\}$ be an $s$-subset of $\Z_p^n$. The \textit{evaluation matrix} $\mathbb{X}(x^{\lambda},V)$ is the $s$-by-$r$ matrix whose element in position $(i,j)$ is $x^{u^j}(v^i)$, the evaluation of $x^{u^j}$ at $v^i$.
\end{definition}

\begin{example}
Let $\lambda_1=\{(0,0),(1,0)\}$, $\lambda_2=\{(0,0),(0,1)\}$, and $V=\{(2,0),(0,1)\}$ be subsets of $\Z_3^2$. Then $\mathbb{X}(x^{\lambda_1},V)=\left[\begin{array}{cc} 1&2\\ 1&0\end{array}\right]$ and  $\mathbb{X}(x^{\lambda_2},V)=\left[\begin{array}{cc} 1&0\\ 1&1\end{array}\right]$.
\end{example}

The following result from \citep{babson2003hilbert} illustrates a connection between basic sets and evaluation matrices, which will be used below.

\begin{theorem}[\citep{babson2003hilbert}]\label{thm:basic}
Let $\lambda$ and $V$ be subsets of $\Z_p^n$. Then $\lambda$ is basic for $I(V)$ if and only if $\mathbb{X}(x^{\lambda},V)$ is invertible. 
\end{theorem}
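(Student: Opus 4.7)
The plan is to identify $\Z_p[x_1,\ldots,x_n]/I(V)$ concretely with $\Z_p^V$ via evaluation, and observe that under this identification, the congruence classes $[x^{u^j}]$ become exactly the columns of $\mathbb{X}(x^\lambda,V)$. Once this is set up, the theorem reduces to the trivial linear algebra statement that a list of vectors in $\Z_p^s$ is a basis if and only if the matrix whose columns they form is invertible (in particular, square).

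The main step is to establish the evaluation isomorphism
\[
\operatorname{ev}_V \colon \Z_p[x_1,\ldots,x_n]/I(V) \;\xrightarrow{\sim}\; \Z_p^{V}, \qquad [f] \longmapsto \bigl(f(v^1),\ldots,f(v^s)\bigr).
\]
Well-definedness and injectivity are immediate from the definition of $I(V)$: the kernel of the composition $\Z_p[x_1,\ldots,x_n]\to\Z_p^V$ is precisely the set of polynomials vanishing on every $v^i$, i.e.\ $I(V)$. For surjectivity, I would exhibit, for each $v^i\in V$, an interpolating polynomial $e_i$ with $e_i(v^i)=1$ and $e_i(v^k)=0$ for $k\neq i$; one can build $e_i$ as a product over $k\neq i$ of affine factors in a coordinate where $v^i$ and $v^k$ differ, suitably normalized. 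This is the Lagrange-style interpolation argument (equivalently, a Chinese Remainder Theorem applied to the pairwise coprime maximal ideals at the points of $V$). In particular $\dim_{\Z_p}\Z_p[x_1,\ldots,x_n]/I(V)=|V|=s$, so any basis of the quotient has exactly $s$ elements.

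Next I would translate the basic condition through $\operatorname{ev}_V$. By construction, $\operatorname{ev}_V([x^{u^j}])$ is the vector $\bigl(x^{u^j}(v^1),\ldots,x^{u^j}(v^s)\bigr)^{T}$, which is precisely the $j$-th column of $\mathbb{X}(x^\lambda,V)$. Because $\operatorname{ev}_V$ is a $\Z_p$-linear isomorphism, the classes $\{[x^{u^j}]:u^j\in\lambda\}$ form a basis of $\Z_p[x_1,\ldots,x_n]/I(V)$ if and only if the columns of $\mathbb{X}(x^\lambda,V)$ form a basis of $\Z_p^{s}$. Standard linear algebra then gives that this basis condition forces $r=s$ and is equivalent to $\mathbb{X}(x^\lambda,V)$ being invertible. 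If $r\neq s$ both sides of the stated equivalence fail automatically (invertibility requires a square matrix, and a basic set must have the correct cardinality~$s$), so the equivalence holds without a separate cardinality hypothesis.

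The only nontrivial obstacle is the surjectivity of $\operatorname{ev}_V$; once the interpolators $e_i$ are in hand, every other claim is a matter of unwinding definitions. I would make sure to justify the existence of the $e_i$ carefully, as it is the one place where the hypothesis that $\Z_p$ is a field (and that distinct points of $V$ differ in at least one coordinate) is actually used.
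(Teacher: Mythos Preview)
Your argument is correct and is the standard way to prove this fact: the evaluation map to $\Z_p^{V}$ is a ring isomorphism (well-defined and injective by the definition of $I(V)$, surjective by Lagrange-type interpolation or, equivalently, the Chinese Remainder Theorem for the pairwise coprime maximal ideals at the points of $V$), and under it the classes $[x^{u^j}]$ become precisely the columns of $\mathbb{X}(x^{\lambda},V)$, so ``basic'' is equivalent to ``columns form a basis'' is equivalent to ``matrix invertible.'' Your handling of the cardinality issue ($r\neq s$) is also fine.

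There is nothing to compare against here: the paper does not supply its own proof of this theorem but simply quotes it from \citep{babson2003hilbert}. Your write-up would serve as a self-contained replacement for that citation; the only place to be careful, as you already flagged, is the explicit construction of the interpolators $e_i$, which is routine over a field.
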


\begin{example}
Let $\lambda_1=\{(0,0),(1,0)\}$, $\lambda_2=\{(0,0),(0,1)\}$, and $V=\{(0,0),(1,0)\}$ be subsets of $\Z_3^2$. Then $\lambda_1$ is basic for $I(V)$ since $\mathbb{X}(x^{\lambda_1},V)=\left[\begin{array}{cc} 1&0\\ 1&1\end{array}\right]$ is invertible; however $\lambda_2$ is not basic for $I(V)$ since $\mathbb{X}(x^{\lambda_2},V)=\left[\begin{array}{cc} 1&0\\ 1&0\end{array}\right]$ is not invertible.
\end{example}

Notice that an initial staircase must be basic, while a basic staircase might not be initial; however, if $I(V)$ has a unique initial staircase (and thus a unique reduced Gr\"{o}bner basis), then $I(V)$ has a unique basic staircase.


\begin{proposition}\label{prop:basic}
An ideal $I(V)$ has a unique initial staircase if and only if $I(V)$ has a unique basic staircase.
\end{proposition}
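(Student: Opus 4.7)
The $(\Leftarrow)$ direction is immediate since every initial staircase is basic: if there is a unique basic staircase, it must coincide with every initial staircase, and at least one initial staircase exists for any fixed monomial order.

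For the forward direction, my plan is to let $\lambda$ be the unique initial staircase and $\mu$ be any basic staircase, and deduce $\mu=\lambda$ by contradiction. Suppose there exists $v\in\mu\setminus\lambda$. Since $\lambda$ is basic, the unique normal form $N_\lambda(x^v)=\sum_{u\in\lambda}c_u x^u$ is defined, and $g := x^v - N_\lambda(x^v)\in I(V)$. The key step is to compute $LT_\prec(g)$ for an arbitrary monomial order $\prec$: uniqueness of $\lambda$ forces $\lambda=SM_\prec(I(V))$ for every $\prec$, so no $x^u$ with $u\in\lambda$ belongs to $\langle LT_\prec(I(V))\rangle$. Therefore $LT_\prec(g)$ cannot equal any such $x^u$, and the only remaining possibility is $LT_\prec(g)=x^v$ for every monomial order.

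This universal leading-term identity is the crux of the argument. A strict inequality $x^v\succ x^u$ that holds under every monomial order is equivalent to strict componentwise divisibility: $u\leq v$ componentwise with $u\neq v$. So every $u$ appearing with $c_u\neq 0$ in $N_\lambda(x^v)$ satisfies $u\leq v$, $u\neq v$. Because $\mu$ is downward closed and $v\in\mu$, every such $u$ actually lies in $\mu$. Hence both $x^v$ and $N_\lambda(x^v)$ lie in $\mathrm{span}\{x^u:u\in\mu\}$ and represent the same class of $\Z_p[x_1,\ldots,x_n]/I(V)$. Basicness of $\mu$ gives linear independence of $\{x^u:u\in\mu\}$ modulo $I(V)$, so the coefficient vectors must match; but $v\notin\lambda$ forces the coefficient of $x^v$ on the right to be $0$, yielding the contradiction $1=0$. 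This shows $\mu\setminus\lambda=\emptyset$, and equal cardinalities $|\mu|=|\lambda|=|V|$ give $\mu=\lambda$.

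The main obstacle I expect is spotting the universal statement $LT_\prec(g)=x^v$ for every $\prec$; this is the observation that drives the whole argument, and it depends essentially on the hypothesis that $\lambda$ is the initial staircase for \emph{every} monomial order. Once this is in place, the downward-closedness of $\mu$ and the linear independence of its monomial basis combine mechanically to close the argument, and no further structural input is required.
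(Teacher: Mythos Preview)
Your argument is correct. The $(\Leftarrow)$ direction is handled exactly as it should be, and in the $(\Rightarrow)$ direction the chain of implications is sound: $g=x^v-N_\lambda(x^v)\in I(V)$ forces $LT_\prec(g)\in\langle LT_\prec(I(V))\rangle$, and since every $x^u$ with $u\in\lambda$ is standard for every $\prec$ by hypothesis, the leading term must be $x^v$ for every order. The passage from ``$x^v\succ x^u$ for all $\prec$'' to ``$u\le v$ componentwise'' is exactly the fact that whenever $x^\alpha\nmid x^\beta$ one can find an order making $x^\beta\prec x^\alpha$; this is the same fact the paper invokes. The final linear-independence step using basicness of $\mu$ and its downward closure is clean, and the cardinality comparison $|\mu|=|\lambda|=|V|$ finishes it.

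The paper's own proof is not really a proof but a citation: it appeals to Proposition~2.2 of Babson, Onn, and Thomas together with the monomial-order fact above. What you have written is effectively a direct, self-contained derivation of the relevant content of that cited proposition in the zero-dimensional setting. So the route is different in presentation---yours is elementary and complete on the page, the paper's outsources the work---but the underlying mechanism (universal comparability of $x^v$ with the support of its normal form, translated into divisibility, then fed into the staircase property of $\mu$) is the same. Your version has the advantage of being readable without chasing the reference.
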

\begin{proof}
We recall that the initial ideals of a polynomial ideal are in a one-to-one correspondence with its reduced Gr\"{o}bner bases. The result then follows directly from Proposition 2.2 in \citep{babson2003hilbert} and the fact that for every two monomials $x^{\alpha},x^{\beta}$ with $x^{\alpha} \nmid x^{\beta}$, there exists a weight vector~$\gamma$ and monomial order $\prec_{\gamma}$ such that $x^{\beta}~\prec_{\gamma} x^{\alpha}$.
\end{proof}
Based on Proposition \ref{prop:basic}, if we want to find out whether $I(V)$ has a unique reduced Gr\"{o}bner basis, we just need to check whether $I(V)$ has a unique basic staircase. In other words, we can check if there exist a unique staircase $\lambda \subseteq \Z_p^n$ such that $\mathbb{X}(x^{\lambda},V)$ is invertible.

Next we present a sufficient condition for $I(V)$ to have a unique reduced Gr\"{o}bner basis.  A construction that will aid in proving this condition is that of a \textit{layer} of a staircase. 

\begin{definition}
Given a staircase $\lambda=\{(u_1,\ldots,u_n):u_i\in \mathbb{N}\}$, the \textit{$i$th layer} of~$\lambda$ with respect to the $j$th coordinate is the subset $\{u\in \lambda:u_j=i\}\subseteq \lambda$. Let~$\ell$ be the largest integer such that $\{u\in \lambda:u_j=\ell\}\neq \emptyset$. The \textit{height} of $\lambda$ in the $jth$ coordinate is defined to be $\ell+1$, denoted as $h_j(\lambda)$.
\end{definition}

Note that we identify a layer of a staircase in $\N^n$ with a staircase in $\N^{n-1}$.

Proposition \ref{prop:basic} prompts the following natural question. If $\lambda\subseteq \Z_p^n$ is a staircase, which subsets $V\subseteq \Z_p^n$ have $\lambda$ as their \textit{unique} basic staircase? Let us consider the case when $V$ is itself a staircase. 

\begin{theorem}\label{thm:staircase}
Let $\lambda$ and $V$ be two staircases in $\Z_p^n$. Then $\lambda$ is basic for $I(V)$ if and only if $\lambda=V$.
\end{theorem}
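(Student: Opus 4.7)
The plan is to prove both directions by induction on $n$, using a change of basis in the last coordinate that exposes a block-triangular structure in the evaluation matrix. By Theorem~\ref{thm:basic}, $\lambda$ is basic for $I(V)$ iff $\mathbb{X}(x^{\lambda}, V)$ is invertible, so it suffices to show that this invertibility is equivalent to $\lambda = V$. I would decompose $\lambda = \bigsqcup_{j} \lambda_j \times \{j\}$ and $V = \bigsqcup_{i} V_i \times \{i\}$ along the last coordinate; because $\lambda$ and $V$ are staircases, the layers are nested ($\lambda_0 \supseteq \lambda_1 \supseteq \cdots$ and $V_0 \supseteq V_1 \supseteq \cdots$) and each layer is itself a staircase in $\Z_p^{n-1}$.

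I would then replace the basis $\{x_n^j\}$ by the falling-factorial basis $x_n^{(j)} := x_n(x_n-1)\cdots(x_n-j+1)$. Nestedness of the layers of $\lambda$ ensures that $\{x^{u'} x_n^{(j)} : (u', j) \in \lambda\}$ spans the same vector space as $\{x^{u} : u \in \lambda\}$, so invertibility of the evaluation matrix is preserved by this basis change. In the new basis the $(i,j)$-block of the evaluation matrix becomes $i^{(j)} \cdot \mathbb{X}(x^{\lambda_j}, V_i)$; since $i^{(j)} = 0$ exactly when $i < j$ (and is nonzero in $\Z_p$ otherwise, as $i < p$), the matrix is block lower triangular.

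For the forward direction, if $\lambda = V$ then each diagonal block is $i! \cdot \mathbb{X}(x^{V_i}, V_i)$, which is square and invertible by the inductive hypothesis applied to the staircase $V_i \subseteq \Z_p^{n-1}$; a block lower triangular matrix with invertible diagonal blocks is invertible. For the reverse direction I would peel off layers starting from the bottom. The top $|V_0|$ rows of the matrix have support only in the first $|\lambda_0|$ columns (the $(0, j)$-blocks vanish for $j \geq 1$), so their linear independence forces $|V_0| \leq |\lambda_0|$. Dually, the first $|\lambda_0|$ columns take the value $(v')^{u'}$ depending only on the projection $v' \in V_0$ of the row, so a nonzero polynomial in $\mathrm{span}\{x^{u'} : u' \in \lambda_0\}$ vanishing on $V_0$ would produce a column dependence; this forces $|\lambda_0| \leq |V_0|$. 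Hence $|V_0| = |\lambda_0|$ and $\mathbb{X}(x^{\lambda_0}, V_0)$ is invertible, so the inductive hypothesis gives $\lambda_0 = V_0$. A block row reduction using the invertible $(0,0)$-block clears the first block column without altering the $(i,j)$-blocks for $i, j \geq 1$ (since the $(0, j)$-blocks are already zero), leaving a strictly smaller matrix of the same block lower triangular form on the remaining layers. Iterating yields $\lambda_i = V_i$ for every $i$.

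The base case $n = 1$ is immediate, since staircases in $\Z_p$ are initial segments $\{0, 1, \ldots, r-1\}$ and the evaluation matrix is then a classical Vandermonde. The subtlest step I anticipate is the column-dependence argument in the reverse direction: one must verify that a nonzero polynomial in $\mathrm{span}\{x^{u'} : u' \in \lambda_0\}$ vanishing on the projection $V_0$ yields a genuine linear dependence among the columns indexed by $\lambda_0 \times \{0\}$ viewed across all of $V$, not merely across rows in $V_0 \times \{0\}$; this works precisely because those columns depend only on $v'$ and the projection $V \to V_0$ is surjective.
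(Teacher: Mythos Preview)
Your argument is correct, and it takes a genuinely different route from the paper's proof. The paper also inducts on $n$, but inside the inductive step it runs a second induction on the height $h_1(\lambda)$: it splits off only the $0$th layer $\lambda_0,V_0$ with respect to $x_1$, obtains a $2\times 2$ block form, and then reduces the remaining block by dividing out $x_1$ (legitimate since every point in $V\setminus V_0$ has $v_1\neq 0$). The resulting data set $V\setminus V_0$ is no longer a staircase but a translate of one, so the paper has to invoke the linear-shift result (Theorem~\ref{prop:shift}) to finish the inductive step. Your falling-factorial change of basis $x_n^j\mapsto x_n^{(j)}$ replaces this entire inner induction: it makes the evaluation matrix block lower triangular in one stroke, with $(i,j)$-block $i^{(j)}\,\mathbb{X}(x^{\lambda_j},V_i)$, and the nestedness $\lambda_0\supseteq\lambda_1\supseteq\cdots$ is exactly what guarantees that the new ``monomials'' $x^{u'}x_n^{(j)}$ with $(u',j)\in\lambda$ span the same space as $x^{\lambda}$. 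Your peeling argument then handles all layers uniformly, and in particular does not need Theorem~\ref{prop:shift}, so your proof is self-contained and removes a forward reference present in the paper. The paper's approach, on the other hand, stays closer to the raw monomial basis and requires no clever change of coordinates. Your treatment of the subtle column-dependence step is also sound: since every projection $v'$ of a point in $V$ lies in $V_0$, a polynomial in $\mathrm{span}\{x^{u'}:u'\in\lambda_0\}$ vanishing on $V_0$ really does yield a linear dependence among those columns across all of $V$, and the same reasoning carries through at each layer because $\bigcup_{i\ge k}V_i=V_k$.
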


\begin{proof}
We will prove this by induction on the number of variables.

\smallskip

Let $n=1$. 
To prove the necessary condition, suppose $\lambda\neq V$. Since there is only one variable, we must have $|\lambda|\neq |V|$. Therefore, $\mathbb{X}(x^{\lambda},V)$ is not invertible since it is not a square matrix. By Theorem \ref{thm:basic}, $\lambda$ is not basic.  
For the sufficient condition, assume $\lambda=V$. Then $\mathbb{X}(x^{\lambda},V)$ is a square Vandermonde matrix. Since $V$ is a set of distinct points, $\mathbb{X}(x^{\lambda},V)$ is invertible. By Theorem \ref{thm:basic}, $\lambda$ is basic.

Assume the inductive hypothesis holds for $n=k$ and consider $n=k+1$. We will prove the inductive step by induction on the height of the monomial staircase with respect to the first coordinate.  

Consider the base case when $h_1(\lambda)=1$; in other words, for all $u\in \lambda$, we have $u_1=0$. That is to say all monomials of $x^{\lambda}$ do not involve $x_1$. 

\smallskip
\begin{itemize}
\item[$(\Leftarrow)$] Suppose $\lambda=V$. Then the first coordinate of any point in $V$ is $0$. Therefore, $\lambda$ and $V$ are staircases with one fewer variable, $x_1$. Based on our inductive hypothesis for $n=k$, we have $x^{\lambda}$ are basic monomials of $I(V)$.

\item[$(\Rightarrow)$] Suppose $\lambda\neq V$. Then we consider three cases. 
\begin{itemize}
	\item[C1:] $|\lambda|\neq |V|$. In this case, $\mathbb{X} (x^{\lambda},V)$ is not invertible as it is not square.

	\item[C2:] $|\lambda|= |V|$ and $h_1(V)\geq 2$. In this case, $\mathbb{X} (x^{\lambda},V)$ is not invertible as at least two rows are the same.

	\item[C3:] $|\lambda|= |V|$ and $h_1(V)=h_1(\lambda)=1$, while $\lambda\neq V$. In this case, the first coordinate of any point in $\lambda$ and $V$ is $0$. In other words, $\lambda$ and $V$ are staircases with one fewer variable, $x_1$. Based our inductive hypothesis for $n=k$, we have that $\mathbb{X}(x^{\lambda},V)$ is not invertible.

\end{itemize}
In each case, the evaluation matrix is not invertible and so $\lambda$ is not basic, concluding the base case of $h_1(\lambda)=1$.
\end{itemize}

Assume the inductive hypothesis holds for all monomial staircases $\lambda$ with $1\leq h_1(\lambda) \leq d$. Let us consider a staircase $\lambda$ with $h_1(\lambda)=d+1$.

\begin{itemize}
\item[$(\Leftarrow)$] Suppose $\lambda=V$. Let $\lambda_0:=\{u\in \lambda :u_1=0\}$ denote the $0$th layer of $\lambda$ and $V_0:=\{v\in \lambda :v_1=0\}$ denote the $0$th layer of $V$ with respect to the first coordinate. Since $\lambda=V$, we have $\lambda_0=V_0$. By inductive hypothesis, the evaluation matrix $\mathbb{X}(x^{\lambda_0},V_0)$ is invertible. Now let us consider the evaluation matrix $\mathbb{X}(x^{\lambda},V)$. We can reorder rows and columns of $\mathbb{X}(x^{\lambda},V)$, so that $\mathbb{X}(x^{\lambda_0},V_0)$ appears as the upper left submatrix of $\mathbb{X}(x^{\lambda},V)$. Since the upper left submatrix of $\mathbb{X}(x^{\lambda},V)$ is invertible, after elementary row and column operations, we can transform $\mathbb{X}(x^{\lambda},V)$ into a block matrix of the form $$\left[\begin{array}{cc} I&0\\ 0&\mathbb{X}(x^{\lambda\backslash\lambda_0},V\backslash V_0)\end{array}\right].$$ 
Moreover, $x_1$ divides all monomials in $x^{\lambda\backslash\lambda_0}$ and for any point $v\in V\backslash V_0$, we have $v_1\neq 0$. Therefore, $\mathbb{X}(x^{\lambda\backslash\lambda_0},V\backslash V_0)$ is invertible if and only if $\mathbb{X}(\frac{x^{\lambda\backslash \lambda_0}}{x_1},V\backslash V_0)$ is invertible. Note that $\frac{x^{\lambda\backslash \lambda_0}}{x_1}$ corresponds to a staircase with height at most $d$ and $V\backslash V_0$ is a linear shift of the same staircase, so~$X^*$ is invertible by the inductive hypothesis and Theorem \ref{prop:shift}. Hence the original evaluation matrix $\mathbb{X}(x^{\lambda},V)$ is also invertible; thus $\lambda$ is basic.

\item[$(\Rightarrow)$] Suppose $\lambda\neq V$. If $|\lambda|\neq |V|$, then $\mathbb{X}(x^{\lambda},V)$ is not invertible since it is not a square matrix.

Assume $|\lambda|= |V|$. We can reorder rows and columns so that $\mathbb{X}(x^{\lambda},V)$ appears in the form
\[
\left[\begin{array}{cc} \mathbb{X}(x^{\lambda_0},V_0) &0\\ A&\mathbb{X}(x^{\lambda\backslash\lambda_0},V\backslash V_0)\end{array}\right].
\]
Note that the row space of $A$ is a subspace of the row space of $\mathbb{X}(x^{\lambda_0},V_0)$. 

Suppose $\mathbb{X}(x^{\lambda_0},V_0)$ is not a square matrix. We then consider two cases.
\begin{itemize}
	\item[C1:] If $\mathbb{X}(x^{\lambda_0},V_0)$ has more rows than columns, then the rows of $[\mathbb{X}(x^{\lambda_0},V_0),0]$ are linearly dependent.

	\item[C2:] If $\mathbb{X}(x^{\lambda_0},V_0)$ has more columns than rows, then the columns of $[\mathbb{X}(x^{\lambda_0},V_0),A]^T$ are linearly dependent.
\end{itemize}

In either case, $\mathbb{X}(x^{\lambda_0},V_0)$ is not invertible.

If $\mathbb{X}(x^{\lambda_0},V_0)$ is a square matrix but $\lambda_0\neq V_0$, then $\mathbb{X}(x^{\lambda_0},V_0)$ is not invertible by the inductive hypothesis. So $\mathbb{X}(x^{\lambda},V)$ is not invertible.

If $\mathbb{X}(x^{\lambda_0},V_0)$ is a square matrix and $\lambda_0= V_0$, then we must have $\lambda\backslash\lambda_0 \neq V\backslash V_0$. Note that $\mathbb{X}(x^{\lambda\backslash\lambda_0},V\backslash V_0)$ is invertible if and only if $\mathbb{X}(\frac{x^{\lambda\backslash \lambda_0}}{x_1},V\backslash V_0)$ is invertible. Since $\lambda\backslash\lambda_0 \neq V\backslash V_0$, $\frac{x^{\lambda\backslash \lambda_0}}{x_1}$ corresponds to a staircase with height at most $d$, and $V\backslash V_0$ is a linear shift of some other staircase, $\mathbb{X}(\frac{x^{\lambda\backslash \lambda_0}}{x_1},V\backslash V_0)$ is not invertible by the inductive hypothesis and Theorem~\ref{prop:shift}. Therefore, $\mathbb{X}(x^{\lambda},V)$ is also not invertible.

Since the evaluation matrix is not invertible, $\lambda$ is basic. 
\end{itemize}

Hence the inductive step holds for $h_1(\lambda)=d+1$, thereby concluding the inductive proof on the height of $\lambda$.  Completing the nested inductive proof completes the inductive step for $n=k+1$. Therefore, the original statement holds for all $n\in \N$.  
\end{proof}

\begin{corollary}\label{cor:UGB}
If $V\subseteq \Z_p^n$ is a staircase, then $I(V)$ has a unique reduced Gr\"{o}bner basis for any monomial order.
\end{corollary}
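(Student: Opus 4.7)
The plan is to derive the corollary directly by chaining together Theorem \ref{thm:staircase} with Proposition \ref{prop:basic} and the standard correspondence between reduced Gr\"obner bases and initial staircases recalled in the proof of that proposition. Since the two ingredients are already in place, the argument should be essentially a bookkeeping one; the only thing to verify is that ``unique staircase which is basic'' really translates to ``unique reduced Gr\"obner basis for every monomial order.''

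First, I would assume $V\subseteq \Z_p^n$ is a staircase and invoke Theorem \ref{thm:staircase} in the following form: for any staircase $\lambda \subseteq \Z_p^n$, $\lambda$ is basic for $I(V)$ if and only if $\lambda = V$. In particular, $V$ is itself basic for $I(V)$, and it is the only staircase with that property. Thus $I(V)$ has a unique basic staircase, namely $V$.

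Next, I would apply Proposition \ref{prop:basic}, which asserts that having a unique basic staircase is equivalent to having a unique initial staircase. Hence $I(V)$ has a unique initial staircase. Combining this with the one-to-one correspondence between initial staircases of $I(V)$ and reduced Gr\"obner bases of $I(V)$ (valid across all monomial orders, and already used inside the proof of Proposition \ref{prop:basic}), one concludes that there is a single reduced Gr\"obner basis for $I(V)$ regardless of the monomial order chosen.

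There is no genuine obstacle in this proof beyond verifying that the hypotheses of Theorem \ref{thm:staircase} and Proposition \ref{prop:basic} are met: the former needs both $\lambda$ and $V$ to be staircases, which holds because we are quantifying over candidate basic staircases $\lambda$ and we have assumed $V$ is a staircase; the latter requires no side conditions. The only subtle point to state cleanly is why ``unique reduced Gr\"obner basis for some monomial order'' upgrades to ``for every monomial order,'' but this is immediate from the fact that the uniqueness statement is about the set of all initial ideals, equivalently all reduced Gr\"obner bases, of $I(V)$. So the whole corollary should come out in a few lines.
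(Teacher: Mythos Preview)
Your argument is correct and is precisely the unpacking of the paper's one-line proof, which simply says the corollary ``follows immediately from Theorem~\ref{thm:staircase}.'' You have spelled out the implicit chain through Proposition~\ref{prop:basic} and the bijection between initial staircases and reduced Gr\"obner bases, which is exactly what the paper intends.
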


\begin{proof}
It follows immediately from Theorem \ref{thm:staircase}.
\end{proof}

\begin{example}
The set $V=\{(0, 0), (0, 1), (1, 0)\}$ is a staircase in $\Z_3^2$. Its corresponding ideal $I(V)$ has a unique reduced Gr\"{o}bner basis $\mathcal{G}=\{y^2-y, xy, x^2-x\}$ with respect to any monomial order.
\end{example}

Before we state the main theorem of this section, we introduce an equivalence relation on the subsets of $\Z_p^n$ that is useful in the context of staircases and Gr\"{o}bner bases.

\begin{definition}\label{linear-shift}
For $V_1,V_2\subseteq \Z_p^n$ with $|V_1|=|V_2|$, we say that $V_1$ is a \textit{linear shift} of $V_2$, denoted $V_1\overset{L}{\sim} V_2$, if there exists  $\phi=(\phi_1,\ldots,\phi_n):\Z_p^n \rightarrow \Z_p^n$ such that $V_1=\phi(V_2)$ and $\phi_i:\mathbb Z_p\rightarrow \mathbb Z_p$ is defined coordinate-wise as $\phi_i(x_i)=a_ix_i+b_i$ for some $a_i\in \left(\Z_p\right)^\times$ and $b_i\in \Z_p$ for $i=1,\ldots,n$.
\end{definition}

It is straightforward to see that linear shift is a bijection between two point sets and defines an equivalence relation on subsets of $\mathbb Z_p^n$ of the same size.

\begin{example}
Consider $V_1,V_2,V_3\subseteq \Z_3^2$, where $V_1=\{(0,0),(0,1)\}$, $V_2=\{(1,1),(1,2)\}$, and $V_3=\{(1,1),(2,2)\}$. Then $V_1\overset{L}{\sim} V_2$ since $V_1=\phi(V_2)$, where $\phi=(x+1,x+1)$. On the other hand, $V_1\not\overset{L}{\sim} V_3$ since the first coordinates of the points in $V_1$ are the same while in $V_3$ they are different.
\end{example}

We will see next that linear shifts preserve standard monomial bases.

\begin{theorem}\label{prop:shift}
If $V_1,V_2\subseteq \Z_p^n$ and $V_1\overset{L}{\sim} V_2$, then $I(V_1)$ and $I(V_2)$ have the same leading term ideals and thus the same standard monomial bases.
\end{theorem}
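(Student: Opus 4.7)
The plan is to lift the geometric linear shift $\phi\colon V_2\to V_1$ to a ring automorphism $\psi$ of $\Z_p[x_1,\ldots,x_n]$ defined by $\psi(x_i)=a_ix_i+b_i$, and then to establish two facts that together give the conclusion: (a) $\psi$ carries $I(V_1)$ bijectively onto $I(V_2)$, and (b) for every polynomial $f$ and every monomial order $\prec$, the leading monomial of $\psi(f)$ coincides with the leading monomial of $f$. Combined, these give $LT_\prec(I(V_2))=LT_\prec(\psi(I(V_1)))=LT_\prec(I(V_1))$, and hence the same standard monomial bases (which are just the complement of the leading-term ideal in the set of all monomials).

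Step (a) should be essentially a substitution check. By construction $\psi(f)(v)=f(\phi(v))$ for every $v\in\Z_p^n$. Since $V_1=\phi(V_2)$, this gives $f\in I(V_1)\iff\psi(f)\in I(V_2)$. Because each $a_i\in(\Z_p)^\times$, the map $\psi$ is a ring automorphism (its inverse is defined by $x_i\mapsto a_i^{-1}(x_i-b_i)$), so one upgrades the equivalence to the equality $\psi(I(V_1))=I(V_2)$.

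Step (b) is where I expect the main subtlety. For a single monomial $x^\alpha$,
\[
\psi(x^\alpha)=\prod_i(a_ix_i+b_i)^{\alpha_i}=\Bigl(\prod_i a_i^{\alpha_i}\Bigr)x^\alpha + \sum_{\beta\lneq\alpha}c_\beta\,x^\beta,
\]
where the sum runs over $\beta$ with $\beta\le\alpha$ coordinate-wise and $\beta\ne\alpha$. Since every monomial order respects divisibility, $x^\beta\mid x^\alpha$ with $\beta\ne\alpha$ forces $x^\beta\prec x^\alpha$, so every ``lower'' term is strictly below $x^\alpha$ in $\prec$. Now let $f=\sum_\alpha c_\alpha x^\alpha$ have leading monomial $x^{\alpha^*}$. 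A potential worry is that the top contributions coming from different $\psi(x^\alpha)$ could cancel or produce a new maximum; this is the step I would be most careful with. However, a monomial $x^{\alpha^*}$ can appear in $\psi(x^\alpha)$ only if $\alpha^*\le\alpha$ coordinate-wise, which forces $x^{\alpha^*}\preceq x^\alpha$; combined with the maximality of $\alpha^*$ among monomials of $f$, this forces $\alpha=\alpha^*$. Hence the coefficient of $x^{\alpha^*}$ in $\psi(f)$ is $c_{\alpha^*}\prod_i a_i^{\alpha^*_i}$, which is nonzero, and no monomial strictly greater than $x^{\alpha^*}$ in $\prec$ appears in $\psi(f)$. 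Therefore $LT_\prec(\psi(f))$ and $LT_\prec(f)$ have the same monomial part.

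Putting the pieces together: the set of leading monomials of elements of $I(V_2)=\psi(I(V_1))$ equals the set of leading monomials of elements of $I(V_1)$, so $LT_\prec(I(V_1))=LT_\prec(I(V_2))$ as monomial ideals, and consequently $SM_\prec(I(V_1))=SM_\prec(I(V_2))$. Since the monomial order $\prec$ was arbitrary, this completes the proof. The main obstacle, as noted, is verifying cleanly that no cancellation among top-degree contributions occurs when expanding $\psi(f)$; the coordinate-wise divisibility structure of the lower terms together with compatibility of any monomial order with divisibility is what rules this out.
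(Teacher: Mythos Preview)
Your proof is correct and follows essentially the same approach as the paper: both compose with the affine map to move between $I(V_1)$ and $I(V_2)$ and observe that this substitution preserves leading monomials for any monomial order. The paper simply asserts that $f$ and $f\circ\phi$ have the same leading monomial, whereas you supply the careful justification via coordinate-wise divisibility and the nonvanishing of $\prod_i a_i^{\alpha_i^*}$; otherwise the arguments coincide.
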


\begin{proof}
Let $V_1\overset{L}{\sim} V_2$.  Then there is a permutation $\phi(x_1,\ldots,x_n)=(a_1x_1+b_1,\ldots,a_nx_n+b_n)$ for some $a_i, b_i\in \mathbb Z_p$ with $a_i\neq 0$ for all $i$, such that $\phi(V_1)=V_2$. Observe that
\[
\begin{split}
I(V_2) &= \{f:f(v)=0, \forall v\in V_2\}\\
&= \{f:f(\phi(u))=(f\circ \phi )(u)=0,\forall u\in V_1\}.
\end{split}
\]
Thus, for any $f\in I(V_2)$, we have $f\circ \phi \in I(V_1)$. Since $V_2$ is a linear shift of $V_1$ via~$\phi$, $(f\circ \phi)(x_1,\ldots,x_n)=f(\phi(x_1,\ldots,x_n)=f(a_1x_1+b_1,\ldots,a_nx_n+b_n)$.  So $f$ and $f\circ \phi$ have the same leading monomial with respect to any monomial order~$\prec$. Therefore, $LT_\prec(I(V_2))\subseteq LT_\prec(I(V_1))$. We can show $LT_\prec(I(V_1))\subseteq LT_\prec(I(V_2))$ by replacing $\phi$ with $\phi^{-1}$. Hence $LT_\prec(I(V_1))= LT_\prec(I(V_2))$ with respect to any monomial order. 
\end{proof}


\begin{corollary}\label{cor:same}
If $V_1,V_2\subseteq \Z_p^n$ and $V_1\overset{L}{\sim} V_2$, then $I(V_1)$ and $I(V_2)$ have the same number of reduced Gr\"{o}bner bases. In particular, for any monomial order, $I(V_1)$ has a unique reduced Gr\"{o}bner basis if and only if $I(V_2)$ has a unique reduced Gr\"{o}bner basis.
\end{corollary}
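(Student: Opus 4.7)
The plan is to deduce this directly from Theorem~\ref{prop:shift}, exploiting the standard bijection between the set of reduced Gröbner bases of a polynomial ideal and the set of its distinct initial (leading term) ideals as the monomial order varies. Since this bijection was already invoked in the proof of Proposition~\ref{prop:basic}, I will freely use the fact that the number of reduced Gröbner bases of $I$ equals the number of distinct monomial ideals arising as $LT_\prec(I)$ for some monomial order~$\prec$.

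First I would fix a linear shift $\phi$ witnessing $V_1 \overset{L}{\sim} V_2$ and note that by Theorem~\ref{prop:shift}, for \emph{every} monomial order $\prec$ we have the equality $LT_\prec(I(V_1)) = LT_\prec(I(V_2))$. Consequently, the sets
\[
\mathcal{L}_i \;=\; \{\, LT_\prec(I(V_i)) : \prec \text{ a monomial order}\,\}, \qquad i=1,2,
\]
coincide as collections of monomial ideals in $\Z_p[x_1,\ldots,x_n]$. Since each distinct leading term ideal of $I(V_i)$ corresponds to exactly one reduced Gröbner basis, $|\mathcal{L}_1|=|\mathcal{L}_2|$ gives that $I(V_1)$ and $I(V_2)$ have the same number of reduced Gröbner bases.

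For the ``in particular'' clause, observe that $I(V_i)$ has a unique reduced Gröbner basis precisely when $|\mathcal{L}_i|=1$, i.e., when all monomial orders yield the same initial ideal. By the equality $\mathcal{L}_1=\mathcal{L}_2$ established above, $|\mathcal{L}_1|=1$ if and only if $|\mathcal{L}_2|=1$, which is the desired equivalence.

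I do not anticipate a real obstacle: the statement is essentially a repackaging of Theorem~\ref{prop:shift} through the bijection between reduced Gröbner bases and initial ideals. The only mild subtlety worth flagging explicitly is that the equality of leading term ideals must be noted to hold for \emph{every} monomial order simultaneously (not merely monomial order by monomial order for a fixed $\prec$), so that the counting argument over all $\prec$ is legitimate; this is exactly the strength of Theorem~\ref{prop:shift} as stated.
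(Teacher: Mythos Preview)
Your proposal is correct and follows essentially the same approach as the paper: invoke Theorem~\ref{prop:shift} to conclude that $LT_\prec(I(V_1))=LT_\prec(I(V_2))$ for every monomial order, and then use the one-to-one correspondence between initial ideals and reduced Gr\"obner bases to equate the counts. The paper's own proof is a one-sentence version of exactly this argument.
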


\begin{proof}
Due to the one-to-one correspondence between initial ideals and reduced Gr\"{o}bner bases, Theorem~\ref{prop:shift} implies that $I(V_1)$ and $I(V_2)$ have the same number of reduced Gr\"{o}bner bases. 
\end{proof}

\begin{corollary} \label{cor:shift}
If $V\subseteq \Z_p^n$ is a linear shift of a staircase, then $I(V)$ has a unique reduced Gr\"{o}bner basis.
\end{corollary}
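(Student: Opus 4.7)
The plan is to assemble two results already established in the excerpt: Corollary \ref{cor:UGB}, which guarantees that when $W \subseteq \Z_p^n$ is itself a staircase the ideal $I(W)$ has a unique reduced Gr\"obner basis with respect to every monomial order, and Corollary \ref{cor:same}, which asserts that linearly shifted point sets give rise to ideals with the same number of reduced Gr\"obner bases. The hypothesis of the corollary gives exactly the bridge needed to chain these two facts.

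More concretely, I would unpack the hypothesis as: there exist a staircase $W \subseteq \Z_p^n$ and a coordinate-wise affine bijection $\phi = (\phi_1,\ldots,\phi_n)$ with $\phi_i(x_i) = a_i x_i + b_i$, $a_i \in (\Z_p)^{\times}$, $b_i \in \Z_p$, such that $V = \phi(W)$. By Definition \ref{linear-shift} this means $V \overset{L}{\sim} W$. Applying Corollary \ref{cor:UGB} to the staircase $W$ shows that $I(W)$ has a unique reduced Gr\"obner basis for any monomial order, and then Corollary \ref{cor:same} transfers this uniqueness across the equivalence class: the number of reduced Gr\"obner bases of $I(V)$ equals that of $I(W)$, which is one.

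There is no real obstacle, since all the technical work already lives in Theorem \ref{thm:staircase} (uniqueness of the basic staircase when $V$ is itself a staircase) and Theorem \ref{prop:shift} (invariance of leading term ideals under linear shifts). The corollary is a one-line composition of its two predecessors, and the only care required is to note that ``$V$ is a linear shift of a staircase'' is precisely the hypothesis needed to apply Corollary \ref{cor:same} with the staircase $W$ in the role of $V_2$ and the given $V$ in the role of $V_1$.
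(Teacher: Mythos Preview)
Your argument is correct and essentially identical to the paper's: the paper's one-line proof cites Theorems \ref{thm:staircase} and \ref{prop:shift} directly, while you cite their immediate consequences (Corollaries \ref{cor:UGB} and \ref{cor:same}), but the logical content is the same.
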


\begin{proof}
This follows from Theorems \ref{thm:staircase} and \ref{prop:shift}.
\end{proof}

\begin{example}
Consider subsets $V_1=\{(0, 0), (0, 1), (1, 0)\}$ and $V_2=\{(0, 1), (0, 2), (2, 2)\}$ of $\Z_3^2$. Notice that $V_1$ is a staircase while $V_2$ is not. As $V_2=\phi(V_1)$, where $\phi=(2x,2x+2)$, $V_1\overset{L}{\sim}V_2$. We see that $I(V_2)$ has a unique reduced Gr\"{o}bner basis $\mathcal{G}=\{y^2-1, xy+x, x^2+x\}$.
\end{example}

A set of points being a linear shift of a staircase is not a necessary condition for the corresponding ideal of points to have a unique reduced Gr\"{o}bner basis, as is shown in the following example.

\begin{example}
Consider the set $V=\{(0, 0, 0), (1, 0, 0), (1, 1, 0), (1, 1, 1)\}$.  Since each function in the component-wise definition of a linear shift $\phi$ is of the form $\phi_i(x_i)=x_i$ or $x_i+1$ for $i=1,2,3$, there are only $2^3$ possible linear shifts and by inspection we see that when applied to $V$ none of them results in a staircase. Therefore, $V$ is not a linear shift of a staircase in $\Z_2^3$. However, using Theorem~3.3(b) in \citep{robbiano-unique-gb}, for example, we see  that $I(V)$ has this single reduced Gr\"{o}bner basis $\mathcal{G}=\{z^2+z, yz+z, y^2+y, xz+z, xy+y, x^2+x\}$ regardless of the choice of monomial order.		
\end{example}

We conclude this section by noting that $V\subseteq \Z_2^n$ is a simplicial complex if and only if it is a staircase. Thus Corollaries \ref{cor:UGB} and \ref{cor:shift} generalize Proposition~4.2 in \citep{gb} for an arbitrary finite field and linear shift.

\section{Applications}

In this section, we apply our main results to a small Boolean model of the well-studied \textit{lac} operon and highlight a possible improvement on the time to compute multiple Gr\"obner bases.

\subsection{Design of Experiments in the Lac Operon}
\label{sec:lacop}

The \textit{lac} operon is a system of genes which control the transport and metabolism of lactose in many bacteria including \textit{E. coli}. While there are numerous models for the \textit{lac} operon (see, for example, \citep{lac-santillian,lac-goodwin,lac-ozbudak,lac-wong}), we consider a Boolean model proposed in \citep{vcs}. There the authors reduced the system to a core subnetwork consisting of the following four variables: $M$ representing \textit{lac} mRNA, $L$ intercellular lactose, $L_e$ extracellular lactose, and $G_e$ extracellular glucose.  The Boolean model for this subnetwork is given by the following Boolean functions, where extraneous variables are introduced to capture intermediate values ($L_m, L_{em}$) of lactose inside and outside of the cell respectively: see Section 4.2.2 in \citep{vcs} for a full description of the model.

\begin{align*}
f_M&=\neg G_e\wedge(L \vee L_m)\\
f_L&=M\wedge L_e\wedge \neg G_e\\
f_{L_m}&=((L_{em}\wedge M)\vee L_e)\wedge \neg G_e\\
f_{L_e}&=L_e\\
f_{G_e}&=G_e\\
f_{L_{em}}&=L_{em}
\end{align*}

For the sake of illustrating the utility of the above results, we reduce this model to only include the four essential variables. To this end, we replace $L_{em}$ with $L_e$ and $L_m$ with $L$, and remove all instances of $L_{em}$ and $L_m$ via substitution.  Doing so produces 
$$f_{L_m}=((L_{e}\wedge M)\vee L_e)\wedge \neg G_e=L_e \wedge \neg G_e$$ 
which we substitute into the function $f_M$:
$$f_M=\neg G_e\wedge(L \vee (L_e \wedge \neg G_e))=\neg G_e\wedge(L \vee L_e).$$
This results in the following Boolean network on four variables, with wiring diagram depicted in Figure~\ref{lacop:wd}:
\begin{align*}
f_M&=\neg G_e\wedge(L \vee L_e)\\
f_L&=M\wedge L_e\wedge \neg G_e\\
f_{L_e}&=L_e\\
f_{G_e}&=G_e.\\
\end{align*}

\begin{figure}[h]
    \centering
\includegraphics[width=5cm]{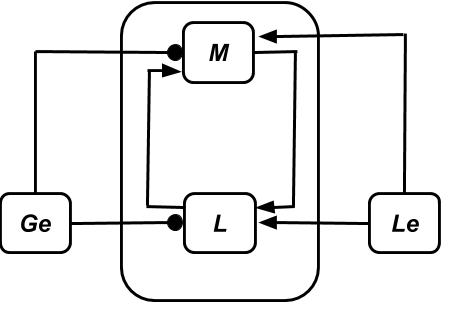}
    \caption{Wiring diagram for a simplified Boolean model of the \textit{lac} operon in \textit{E. coli}.  Directed edges with pointed ends indicate positive regulation, while directed edges with round ends indicate negative regulation.  The variables $G_e$ and $L_e$ regulate the operon from outside the cell, represented by a rectangle around $M$ and $L$.}
    \label{lacop:wd}
\end{figure}


Boolean functions can be rewritten as polynomial functions over $\mathbb Z_2$ using the following translations: the Boolean expression $x \lor y$ can be represented as the polynomial $x+y+xy$, $x\land y$ as $xy$, and $\lnot x$  as $x+1$. Applying these rules to the above functions yields the finite dynamical system $f:\mathbb Z_2^4\rightarrow \mathbb Z_2^4$ where $f=(f_{x_1},f_{x_2},f_{x_3},f_{x_4})$ and each $f_{x_i}$ is a polynomial in the variables $x_1:=M$, $x_2:=L$, $x_3:=L_e$ and $x_4:=G_e$. 

\begin{align}
\label{lac-model}
\nonumber
f_{x_1}&= x_2x_3x_4 + x_2x_3 + x_2x_4 + x_3x_4 + x_2 + x_3 \\ \nonumber
f_{x_2}&=x_1x_3x_4 + x_1x_3 \\ 
f_{x_3}&=x_3\\ \nonumber
f_{x_4}&=x_4\\ \nonumber
\end{align}

\begin{figure}[h]
    \centering
    \includegraphics[width=11cm]{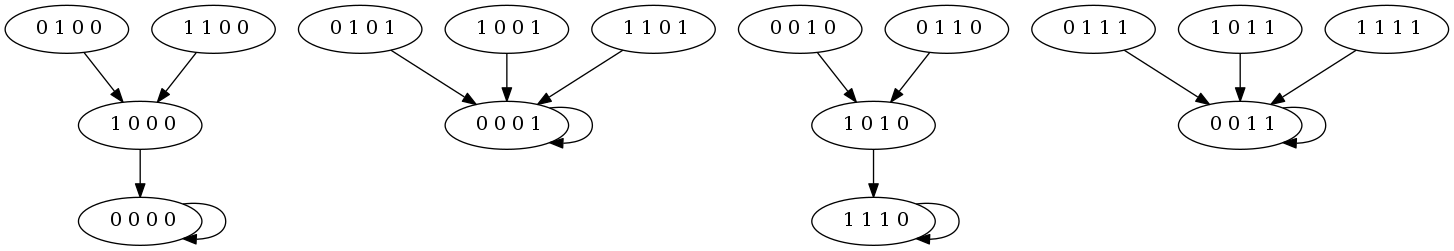}
    \caption{State space graph for the 4-dimensional finite dynamical system given by $f=(f_{x_1},f_{x_2},f_{x_3},f_{x_4})$. Each node is a state $(M, L, L_e,G_e)$ of the network and a directed edge from state $a$ to state $b$ indicates that $f(a)=b$.}
    \label{lacop:ss}
\end{figure}


Consider the first component of the state space of $f=(f_{x_1},f_{x_2},f_{x_3},f_{x_4})$ in Figure \ref{lacop:ss}: $$C_1=\{\{0, 0, 0, 0\}, \{0, 1, 0, 0\}, \{1, 0, 0, 0\}, \{1, 1, 0, 0\}\}.$$
Note that the data points in $C_1$ form a staircase.  By Corollary \ref{cor:shift}, the ideal $I(C_1)$ has a unique reduced Gr\"obner basis for any monomial order, namely
$$ G_1 = \{x_1^2+x_1, x_2^2+x_2, x_3, x_4 \}.$$
In particular the data set $C_1$ has the unique leading term ideal $L=\langle x_1^2,x_2^2,x_3,x_4\rangle$ and standard monomial basis $S=\{1,x_1,x_2,x_1x_2\}$ for any monomial order.

If we label the other components similarly,
$$C_2=\{\{0, 0, 0, 1\}, \{0, 1, 0, 1\}, \{1, 0, 0, 1\}, \{1, 1, 0, 1\}\},$$
$$C_3=\{\{0, 0, 1, 0\}, \{0, 1, 1, 0\}, \{1, 0, 1, 0\}, \{1, 1, 1, 0\}\},$$
$$C_4=\{\{0, 0, 1, 1\}, \{0, 1, 1, 1\}, \{1, 0, 1, 1\}, \{1, 1, 1, 1\}\},$$
we find that they are linear shifts of $C_1$, that is, $C_1\overset{\phi_{12}}{\sim} C_2$, $C_1\overset{\phi_{13}}{\sim} C_3$, and 
$C_1\overset{\phi_{14}}{\sim} C_4$,
where
\begin{align*}
\phi_{12}&= (x_1,x_2,x_3,x_4+1) \\
\phi_{13}&= (x_1,x_2,x_3+1,x_4)\\ 
\phi_{14}&= (x_1,x_2,x_3+1,x_4+1).
\end{align*}
According to Corollary \ref{cor:same}, the datasets $C_2$, $C_3$, and $C_4$ have the same leading term ideal and standard monomial basis as $C_1$.  
So each of $C_2$, $C_3$, and $C_4$ also has a unique reduced Gr\"obner basis. 
As such, each of the four data sets will identify a unique polynomial involving the monomials in $S=\{1,x_1,x_2,x_1x_2\}$. The advantage here is that each data set produces only one model; however since the polynomials in Equations (\ref{lac-model}) involve other monomials, we see that none of these data sets would correctly infer the original model.

\subsection{Using Linear Shifts to Compute Gr\"obner Bases}
In \citep{dimitrova2007grobner, dimitrova2014data}, the authors highlight the significance of being able to compute \textit{all} reduced Gr\"obner bases, as each one potentially corresponds to a distinct model.  Here we show how to use linear shifts to compute other Gr\"obner bases given one.  For example, given the Gr\"obner basis $G_1$ and linear shift functions $\phi_{12},\phi_{13},\phi_{14}$ from the previous section, we can directly apply the linear shift functions to produce the generators of the other reduced Gr\"obner bases explicitly, rather than computing them from the respective ideals.  

\begin{align*}
G_2=GB(I(C_2))&= \{x_1^2+x_1, x_2^2+x_2, x_3, x_4+1 \},\\
G_3=GB(I(C_3))&= \{x_1^2+x_1, x_2^2+x_2, x_3+1, x_4 \},\\ 
G_4=GB(I(C_4))&= \{x_1^2+x_1, x_2^2+x_2, x_3+1, x_4+1 \}.
\end{align*}

While algorithms (and their corresponding complexities) related to the above theoretical results are not in the scope of the presented work, we close with a note about its potential to greatly reduce the time to compute Gr\"obner fans of zero-dimensional ideals.  
The worst-case complexity of computing one Gr\"obner basis of a zero-dimensional ideal in a general setting is quadratic in the number of variables $n$ and cubic in the number of points $m$, that is $\mathcal O(nm^3+n^2m^2)$ \citep{abbott2000computing}, with various improvements in specialized settings.  
Computing a Gr\"obner fan for a zero-dimensional ideal from a given Gr\"obner basis is proved to be ``a polynomial-time algorithm in the size of the output'' \citep{fukuda}. 
In settings where data sets yield Gr\"obner fans with distinct cones, we can take advantage of  linear shifts: from one data set and its calculated fan (set of reduced Gr\"obner bases), use the linear shift functions to produce the reduced Gr\"obner bases for the ideals of the linearly shifted points.  We expect that finding linear shifts between data sets will have smaller complexity than computing multiple Gr\"obner fans; this analysis will be performed in future work.

\section{Discussion and Future Work}

We addressed the problem of characterizing data sets which correspond to ideals with a unique reduced Gr\"obner basis with respect to all monomial orders. Our results fill in important theoretical gaps in the use of polynomial dynamical systems as models in systems biology by providing a criterion for determining whether a set of data will give rise to a unique set of predictions (i.e., a unique model) without having to compute all reduced Gr\"obner bases associated with the data. Furthermore, this work decreases the computational cost of modeling using polynomial dynamical systems and has the potential to reduce the financial cost in collecting experimental data by identifying data sets that result in unambiguous hypotheses for future testing. 
This is especially important  in experimental settings since it is common for the number of observations ($m$) to be much smaller than the number of variables ($n$) in the system being studied. 

For example, the set $\{\{0, 0, 1, 0\}, \{0, 1, 0, 1\}, \{1, 0, 0, 1\}, \{1, 1, 0, 0\}, \{1, 1, 1, 1\}\}$ of 5 data points in $\mathbb Z_2$ has 13 distinct reduced Gr\"obner bases and 13 corresponding standard monomial bases.  If these are considered as inputs into the \textit{lac} operon as described in Section \ref{sec:lacop}, these standard monomial bases give rise to 4 distinct functions for $x_1$, 7 for $x_2$, 3 for $x_3$, and 5 for $x_4$, totaling 420 different finite dynamical systems, each with a different set of predictions that must be tested.
The number of  points (new experiments) that must be added to the existing data set to guarantee a unique model is an astonishing six: that is, the data set must be more than doubled! Instead, our work identifies candidate sets that are linear shifts of a staircase and have unique bases. (In fact, there are close to 600 sets with unique GBs out of nearly 4400 sets of size five; though there are many such sets, the chances of successfully picking such a set at random is about 14\%.)  

We note that partitioning data sets of same size into equivalence classes under the linear shift equivalence relation leaves many interesting question to explore. For example, knowing the number and size of equivalence classes as well as the distribution of the number of Gr\"obner bases across equivalence classes would provide valuable information for design of experiments and model inference.

Finally, while we were able to prove a sufficient condition for a set of points to have an ideal with a unique reduced Gr\"obner basis (Corollary \ref{cor:shift}), the condition is not necessary. A necessary and sufficient condition on the set of points which guarantees a unique reduced Gr\"obner basis for any monomial order would be of great value as it could lead to an algorithm for generating all such sets. These results are likely to also benefit other areas that use Gr\"obner bases such as neural ideals computation.

\section*{Acknowledgments}
This work was supported by the National Science Foundation under Awards DMS-1419038 and DMS-1419023.

\bibliographystyle{named}
\bibliography{thesis}

\begin{thebibliography}{}

\bibitem[\protect\citeauthoryear{Abbott \bgroup \em et al.\egroup
  }{2000}]{abbott2000computing}
J.~Abbott, A.~Bigatti, M.~Kreuzer, and L.~Robbiano.
\newblock Computing ideals of points.
\newblock {\em Journal of Symbolic Computation}, 30(4):341--356, 2000.

\bibitem[\protect\citeauthoryear{Adam \bgroup \em et al.\egroup }{2017}]{adam}
N.R. Adam, R.~Wieder, and D.~Ghosh.
\newblock Data science, learning, and applications to biomedical and health
  sciences.
\newblock {\em Ann N Y Acad Sci.}, 1387(1):5--11, 2017.

\bibitem[\protect\citeauthoryear{Babson \bgroup \em et al.\egroup
  }{2003}]{babson2003hilbert}
E.~Babson, S.~Onn, and R.~Thomas.
\newblock The {H}ilbert zonotope and a polynomial time algorithm for universal
  {G}r{\"o}bner bases.
\newblock {\em Advances in Applied Mathematics}, 30(3):529--544, 2003.

\bibitem[\protect\citeauthoryear{Cox \bgroup \em et al.\egroup }{1997}]{cox}
D.~Cox, J.~Little, and D.~O'Shea.
\newblock {\em Ideals, Varieties, and Algorithms}.
\newblock Springer Verlag, New York, 1997.

\bibitem[\protect\citeauthoryear{Curto \bgroup \em et al.\egroup
  }{2013}]{neuralring}
C.~Curto, V.~Itskov, A.~Veliz-Cuba, and N.~Youngs.
\newblock The neural ring: {A}n algebraic tool for analyzing the intrinsic
  structure of neural codes.
\newblock {\em Bulletin of Mathematical Biology}, 75:1571--1611, 2013.

\bibitem[\protect\citeauthoryear{Dalchau \bgroup \em et al.\egroup
  }{2018}]{dalchau}
N.~Dalchau, G.~Szép, R.~Hernansaiz-Ballesteros, C.P. Barnes, L.~Cardelli,
  A.~Phillips, and A.~Csikász-Nagy.
\newblock Computing with biological switches and clocks.
\newblock {\em Nat Comput.}, 17(4):761--779, 2018.

\bibitem[\protect\citeauthoryear{Dimitrova and
  Stigler}{2014}]{dimitrova2014data}
E.~Dimitrova and B.~Stigler.
\newblock Data identification for improving gene network inference using
  computational algebra.
\newblock {\em Bull. Math. Biol.}, 76(11):2923--2940, 2014.

\bibitem[\protect\citeauthoryear{Dimitrova \bgroup \em et al.\egroup
  }{2007}]{dimitrova2007grobner}
E.S. Dimitrova, A.S. Jarrah, R.~Laubenbacher, and B.~Stigler.
\newblock A {G}r{\"o}bner fan method for biochemical network modeling.
\newblock In {\em Proc. 2007 Internat. Symp. Symbolic Algebraic Computat.},
  pages 122--126. ACM, 2007.

\bibitem[\protect\citeauthoryear{Dimitrova \bgroup \em et al.\egroup
  }{2010}]{dimitrova2010}
E.S. Dimitrova, M.P. Licona, J.~McGee, and R.~Laubenbacher.
\newblock Discretization of time series data.
\newblock {\em J Comput Biol}, 17(6):853--868, 2010.

\bibitem[\protect\citeauthoryear{Dimitrova \bgroup \em et al.\egroup
  }{2019}]{robbiano-unique-gb}
E.S. Dimitrova, Q.~He, L.~Robbiano, and B.~Stigler.
\newblock Small {G}r\"{o}bner fans of ideals of points.
\newblock {\em Journal of Algebra and Its Applications}, 2019.

\bibitem[\protect\citeauthoryear{Dong}{2016}]{dong}
T.~Dong.
\newblock A two-dimensional improvement for {F}arr-{G}ao algorithm.
\newblock {\em J Syst Sci Complex}, 29:1382–--1399, 2016.

\bibitem[\protect\citeauthoryear{Farr and Gao}{2006}]{farr}
J.~Farr and S.~Gao.
\newblock Computing {G}r{\"o}bner bases for vanishing ideals of finite sets of
  points.
\newblock In M~Fossorier, H~Imai, S~Lin, and et~al., editors, {\em Applied
  Algebra, Algebraic Algorithms and Error-Correcting Codes}, page 118–127.
  Ann N Y Acad Sci., Springer, Berlin, 2006.

\bibitem[\protect\citeauthoryear{Fukuda \bgroup \em et al.\egroup
  }{2007}]{fukuda}
F.~Fukuda, A.~Jensen, and R.~Thomas.
\newblock Computing {G}r{\"o}bner fans.
\newblock {\em Mathematics of Computation}, 76(260):2189–--2212, 2007.

\bibitem[\protect\citeauthoryear{Garcia \bgroup \em et al.\egroup }{2018}]{gb}
R.~Garcia, L.D. Garc{\'i}a-Puente, R.~Kruse, J.~Liu, D.~Miyata, E.~Petersen,
  K.~Phillipson, and A.~Shiu.
\newblock Gr{\"o}bner bases of neural ideals.
\newblock {\em International Journal of Algebra and Computation}, 2018.

\bibitem[\protect\citeauthoryear{Goodwin}{1963}]{lac-goodwin}
B.~Goodwin.
\newblock {\em Temporal Organization in Cells}.
\newblock Academic Press, 1963.

\bibitem[\protect\citeauthoryear{He}{2016}]{he2016}
Q.~He.
\newblock {\em Algebraic Geometry Arising from Discrete Models of Gene
  Regulatory Networks}.
\newblock PhD thesis, Clemson University, 2016.

\bibitem[\protect\citeauthoryear{Jarrah \bgroup \em et al.\egroup
  }{2007}]{jarrah2007reverse}
A.S. Jarrah, R.~Laubenbacher, B.~Stigler, and M.~Stillman.
\newblock Reverse-engineering of polynomial dynamical systems.
\newblock {\em Adv. Appl. Math.}, 39(4):477--489, 2007.

\bibitem[\protect\citeauthoryear{Laubenbacher and
  Stigler}{2004}]{laubenbacher2004computational}
R.~Laubenbacher and B.~Stigler.
\newblock A computational algebra approach to the reverse engineering of gene
  regulatory networks.
\newblock {\em J. Theor. Biol.}, 229(4):523--537, 2004.

\bibitem[\protect\citeauthoryear{Lidl \bgroup \em et al.\egroup
  }{1997}]{lidl1997finite}
R.~Lidl, H.~Niederreiter, and G.-C. Rota.
\newblock Finite fields: Encyclopedia of mathematics and its applications.
\newblock {\em Comput. Math. Appl.}, 33(7):136--136, 1997.

\bibitem[\protect\citeauthoryear{Ozbudak \bgroup \em et al.\egroup
  }{2004}]{lac-ozbudak}
E.~Ozbudak, M.~Thattai, H.~Lim, B.~Shraiman, and A.~van Oudenaarden.
\newblock Multistability in the lactose utilization network of
  \textit{Escherichia coli}.
\newblock {\em Nature}, 427:737--740, 2004.

\bibitem[\protect\citeauthoryear{Santill\'{a}n}{2008}]{lac-santillian}
M.~Santill\'{a}n.
\newblock Bistable behavior in a model of the \textit{lac} operon in
  \textit{Escherichia coli} with variable growth rate.
\newblock {\em Biophysical Journal}, 94(6):2065--2081, 2008.

\bibitem[\protect\citeauthoryear{Schatz}{2015}]{schatz}
M.C. Schatz.
\newblock Biological data sciences in genome research.
\newblock {\em Genome Res.}, 25:1417--1422, 2015.

\bibitem[\protect\citeauthoryear{Stigler and Chamberlin}{2012}]{chamberlin}
B.~Stigler and H.M. Chamberlin.
\newblock A regulatory network modeled from wild-type gene expression data
  guides functional predictions in \textit{Caenorhabditis elegans} development.
\newblock {\em BMC Systems Biology}, 6(1), 2012.

\bibitem[\protect\citeauthoryear{Veliz-Cuba and Stigler}{2011}]{vcs}
A.~Veliz-Cuba and B.~Stigler.
\newblock Boolean models can explain bistability in the \textit{lac} operon.
\newblock {\em J Comput Biol.}, 18(6):783--794, 2011.

\bibitem[\protect\citeauthoryear{Wong \bgroup \em et al.\egroup
  }{1997}]{lac-wong}
P.~Wong, S.~Gladney, and J.~Keasling.
\newblock Mathematical model of the \textit{lac} operon: Inducer exclusion,
  catabolite repression, and diauxic growth on glucose and lactose.
\newblock {\em Biotechnology Progress}, 13(2):132--143, 1997.

\end{thebibliography}

\end{document}